\documentclass[12pt]{article}
\usepackage{amsmath, amsthm}
\usepackage{amscd,amssymb,amsmath,amsthm}
\usepackage{mathrsfs}
\usepackage[colorlinks=true,linkcolor=blue,citecolor=red]{hyperref}
\textwidth 12cm
\textheight 20cm

%%% Theorem Like Envirouments

\newtheoremstyle{theorem}%name
  {10pt}		  % space above
  {10pt}  % space below
  {\sl}  % bofy font
  {\parindent}     % ident - empty=no indent,  \parindent= paragraph indent
  {\bf}  % thm head font
  {. }    % punctuation after thm head
  { }    % space after thm head: `` ``=normal \newline=linebreak
  {}     % thm head specification
\theoremstyle{theorem}
\newtheorem{theorem}{Theorem}[section]
\newtheorem{corollary}{Corollary}[section]
\newtheorem{lemma}{Lemma}[section]

\newtheorem{definition}{Definition}[section]
\newtheoremstyle{defi}%name
  {10pt}		  % space above
  {10pt}  % space below
  {\rm}  % bofy font
  {\parindent}     % ident - empty=no indent,  \parindent= paragraph indent
  {\bf}  % thm head font
  {. }    % punctuation after thm head
  { }    % space after thm head: `` ``=normal \newline=linebreak
  {}     % thm head specification
\theoremstyle{defi}

%%%% Local Definitions start here

%%%% End of Local Definitions

%\newtheorem{ex}[thm]{Example}
%\newtheorem{prop}[thm]{Proposition}
%\newtheorem{cor}[thm]{corollary}
%\theoremstyle{remark}
%\newtheorem{rem}[thm]{Remark}
%\theoremstyle{definition}

\numberwithin{equation}{section}

\usepackage{hyperref}

\numberwithin{equation}{section}

\def \beqs {\begin{eqnarray*}}
\def \eds {\end{eqnarray*}}
\def \beq {\begin{eqnarray}}
\def \ed {\end{eqnarray}}

\numberwithin{equation}{section}
\usepackage[displaymath, mathlines]{lineno}
%\linenumbers
%\usepackage[authoryear]{natbib}
\begin{document}
\title{\bf Certain Class of Analytic\\ Functions Based on \\$q$-difference operator}
%----------------------------------------------------------------
\author{ K.VIJAYA
\\School of Advanced Sciences,\\ VIT University,
Vellore, India- 632 014.\\
{\bf E-mail: kvijaya@vit.ac.in} }
%---------------------
\date{\vspace{-5ex}}
\maketitle
%================ABSTRACT====================================
\begin{abstract}
In this paper, we considered a generalized class
of starlike functions  defined by Kanas and R\u{a}ducanu\cite{10}  to obtain integral means inequalities and subordination results.Further, we obtain the
for various subclasses of starlike
functions.\end{abstract}
%================KEYWORDS====================================
%Immediately after the abstract, provide a maximum of 8
%keywords, avoiding general and plural terms and multiple concepts (avoid, for example, 'and', 'of'). Be sparing
%with abbreviations: only abbreviations firmly established in the field may be eligible.
{\bf AMS Subject Classification:} 30C45, 30C50\\[2pt]
\textbf{Keywords:} Univalent,  starlike, convex, subordinating
factor sequence, integral means, Hadamard product, $q-$ derivative operator.

%================MAIN TEXT====================================

% -----------------------------------------------------------------------
% Section 1
% -----------------------------------------------------------------------

\section{Introduction}
\label{sec-1}

Let $\mathcal{A}$ denote the class of functions of the form
\begin{equation}\label{e1.1}
f(z) = z + \sum\limits_{n=2}^{\infty} a_nz^n
\end{equation}
which   are analytic in the open disc $\mathbb{U} = \{z :\, |z|<1 \}.$
Also denote by $ \mathcal{T} $ a subclass of $\mathcal{A}$ consisting
functions of the form \begin{equation}\label{e1.8}
 f(z) = z -
\sum\limits_{n=2}^{\infty} a_n z^n,\,\, a_n \geq 0,\,\, z \in \mathbb{U}
\end{equation}
 introduced and studied  new subclasses namely ,the class of
starlike functions of order $\alpha$ denoted by $\mathcal{T}^*(\alpha)$ and
convex functions of order $\alpha$ denoted by $\mathcal{K}(\alpha)$ of $\mathcal{T}$ in\cite{ref13}.
 %##########################################################################################
 \par Now, we refer  to a notion of \textit{q-operators} i.e.
\textit{q-difference operator}  that
play vital role in the theory of hypergeometric series,  quantum
physics and in the operator theory. The application of q-calculus
was initiated by Jackson \cite{fhjj} (also see \cite{aga,prr,10} ).
Kanas and R\u{a}ducanu \cite{10} have used the fractional \textit{q-calculus
operators }in investigations of certain classes of functions which
are analytic in $\mathbb{U}.$

\par For $0<q<1$  the Jackson's \textit{q-derivative}  of a function
$f\in \mathcal{A}$ is, by definition,  given as follows \cite{fhjj}
\begin{equation}\label{in5}
D_{q}f(z) = \left\{\begin{array}{lcl}\dfrac{f(z) - f(qz)}{(1 -
q)z}&for &z \neq 0,\\ f'(0) &for& z=0,\end{array}\right.
\end{equation}
and $D^{2}_{q}f(z) = D_{q}(D_{q}f(z)).$ From (\ref{in5}), we have
$ D_{q}f(z)=1+\sum\limits_{n=2}^{\infty}[n]a_{n}z^{n-1},$
where
\begin{equation}\label{in6}
[n] = \frac{1 - q^{n}}{1 - q},
\end{equation}
is sometimes called \textit{the basic number} $n$. If
$q\rightarrow1^{-}, [n]\rightarrow n$. For a function $h(z) =
z^{m},$ we obtain
$D_{q}h(z)=D_{q}z^{m}=\frac{1 - q^{m}}{1 - q}z^{m-1}=[m]z^{m-1},$ and
$\lim_{q\rightarrow 1^-}D_{q}h(z)=\lim_{q\rightarrow 1^-}\left([m]z^{m-1}\right)= mz^{m-1} = h'(z), $
where $h'$ is the ordinary derivative.
%########################################################
Let $t\in \mathbb{R}$ and $n\in \mathbb{N}.$ Let the $q-$generalized Pochhammer symbol be defined as
$ [t]_n=[t][t+1][t+2]...[t+n-1]$ and for $t>0$ let the $q-gamma$ function be defined by
\begin{equation}\label{qgamma}
    \Gamma_q(t+1)=[t] \Gamma_q(t) \qquad \qquad \Gamma_q(1)=1.
\end{equation}
 Now we recall the definition of Ruscheweyh \textit{q-differential operator} defined and discussed by Kannas and Raducanu\cite{10}.

 \begin{definition}\cite{10} For $f\in \mathcal{A}$ let the Ruscheweyh \textit{q-differential operator} be

 \begin{equation}\label{rq1}
    R^\lambda_q f(z)=f(z)*F_{q.\lambda+1}(z) \quad (z\in \mathbb{U}, \lambda > 1)
 \end{equation}
 where
 \begin{equation}\label{rq2}
   F_{q,\lambda+1}(z)=z+ \sum\limits_{n=2}^{\infty} \frac{\Gamma_q(n+\lambda)}{[n-1]!\Gamma_q(1+\lambda)}z^n= z+\sum\limits_{n=2}^{\infty}\frac{[\lambda+1]_{n-1}}{[n-1]!}z^n
 \end{equation} and $*$ stands for the Hadanard product (or convolution).
 \end{definition}
 From \eqref{rq1} we note that $R^0_q f(z)=f(z); R^1_q f(z)=zD_q f(z)$ and $R^1_q f(z)=\frac{zD_q^m(z^{m-1}f(z))}{[m]!}.$
 Making use of \eqref{rq1} and \eqref{rq2}, we have
 \begin{equation}\label{rq3}
    R^\lambda_q f(z)=z+ \sum\limits_{n=2}^{\infty} \frac{\Gamma_q(n+\lambda)}{[n-1]!\Gamma_q(1+\lambda)}a_nz^n\quad (z\in\mathbb{U})
 \end{equation}
  Note that  as $\lim_{q\rightarrow 1^-}$ we get $$F_{q,\lambda+1}(z)=\frac{z}{(1-z)^{\lambda+1}}{~\rm and~ } R^\lambda_q f(z)= f(z)*\frac{z}{(1-z)^{\lambda+1}}.$$
\iffalse
 Jackson q-derivative satisfy
known rules of differentiation, for example a q-analogue of Leibniz'
rule. As a right inverse, Jackson \cite{fhjj} introduced the
\textit{q-integral} of a function $f$
$$\int\limits_{0}^{z}f(t)d_{q}t =z(1 - q)\sum\limits_{n=0}^{\infty}q^nf(q^n z)= z(1 - q)\sum\limits_{n=0}^{\infty}a_nq^{n}z^{n},$$
provided that the series converges. For a function $h(z) = z^{m},$
we have
$$\int\limits_{0}^{z}h(t)d_{q}t = \int\limits_{0}^{z}t^{m}d_{q}t = \frac{z^{m+1}}{[m+1]_{q}}\quad (m \neq -1),$$
and
$$\lim_{q\rightarrow 1^-}\int\limits_{0}^{z}h(t)d_{q}t = \lim_{q\rightarrow 1^-}\frac{z^{m+1}}{[m+1]_{q}}
= \frac{z^{m+1}}{m+1} = \int\limits_{0}^{z}h(t)dt,$$
where $\int\limits_{0}^{z}h(t)dt$ is the ordinary integral.
\fi
As a consequence of \eqref{in5}, for $f\in \mathcal{A}$ we
obtain
\begin{equation}\label{rq4}
 D_{q} (R^\lambda_q f(z)) = 1 + \sum_{n=2}^\infty [n]\frac{\Gamma_q(n+\lambda)}{[n-1]!\Gamma_q(1+\lambda)}a_{n}z^{n-1}.
\end{equation}
where
\begin{equation}\label{e1.7a}
  \Psi_q(n,\lambda)= \frac{\Gamma_q(n+\lambda)}{[n-1]!\Gamma_q(1+\lambda)}
\end{equation}
Now  we recall the following definition of the function class $\mathcal{ST}^\lambda_q (k,\alpha)$ and its characterization property due to  Kannas and Raducanu \cite{10}.
%---------------------------------------------------------------------------------------------------------------------------------
\par For $0 \leq \alpha <1,$ $k\geq 0$ and  $\lambda \geq -1,$ let
$\mathcal{ST}^\lambda_q (k,\alpha)$ be the subclass of
$\mathcal{A}$ consisting of functions of the form (\ref{e1.1}) and
satisfying the analytic criterion
\begin{equation}\label{e1.7}
\mbox{Re}~ \left (  \frac{z\mathcal{D}_q (R^\lambda_q f(z))
}{R^\lambda_q f(z)}-\alpha \right)
> k \left |  \frac{z\mathcal{D}_q (R^\lambda_q f(z))
}{R^\lambda_q f(z)} - 1 \right |,~~~ z \in \mathbb{U},
\end{equation}
where $ \mathcal{D}_q (R^\lambda_q f(z)$ is given by (\ref{rq4}).
\iffalse
 \par It is  of interest to note that  when $\mu=0 , k=0, \lambda=1$ and $b=0$ yield subclasses  of starlike functions of order $\gamma$ denoted by $T^*(\gamma)$ and convex functions of order $\gamma$ denoted by $C(\gamma)$ of $T$ studied extensively by Silverman\cite{ref13}.
\par Motivated by earlier works of \cite{BAF,GMS,ref13,Silver91}  and \cite{singh} in this
paper, we investigate certain characteristic properties and obtain
the subordination results for the class of functions $f\in
\mathcal{J}^\mu_b(\lambda, \gamma, k)$ and integral means results
for the class of functions  $f\in T \mathcal{J}^\mu_b (\lambda,
\gamma, k).$
%\section{Basic Properties}\label{sec-2}
\fi

\begin{theorem}\label{tb1.1}\cite{10}
Let $f(z)\in \mathcal{A}$ of the form (\ref{e1.1}). If the equality
\begin{equation}\label{e1.13}
\sum\limits_{n=2}^{\infty}  ([n](1+k) - k -\alpha)
]\Psi_q(n,\lambda)~|a_n| \leq 1-\alpha,
\end{equation}
holds true for some $0\leq k <\infty,$  $\lambda \geq -1,$ $0 \leq \alpha <1,$
 and $\Psi_q(n,\lambda)$ is given by (\ref{e1.7a}), then $f\in\mathcal{ST}^\lambda_q (k,\alpha).$
 The result is sharp for  the function
 \begin{equation}\label{sharp}
f_n(z) = z - \frac{1 - \alpha}{\Phi_n(\lambda,\alpha, k)}z^n,
 \end{equation}
 where
 \begin{equation}\label{phi01}
\Phi_n(\lambda,\alpha, k)=([n](1+k) - k -\alpha)
]\Psi_q(n,\lambda)
 \end{equation} and  $\Psi_q(n,\lambda)$ given by \eqref{e1.7a}.
\end{theorem}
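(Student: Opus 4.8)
Put $w(z)=\dfrac{z\mathcal{D}_q(R^\lambda_q f(z))}{R^\lambda_q f(z)}$. By the analytic criterion \eqref{e1.7} that defines $\mathcal{ST}^\lambda_q(k,\alpha)$, the plan is to establish $\mbox{Re}\,w(z)-\alpha>k\,|w(z)-1|$ for every $z\in\mathbb{U}$. First I would record the pointwise estimate
\begin{equation*}
\mbox{Re}\,w(z)-\alpha-k\,|w(z)-1|\ge(1-\alpha)-(1+k)\,|w(z)-1|,
\end{equation*}
which holds because $\mbox{Re}\,w-\alpha=\mbox{Re}(w-1)+(1-\alpha)\ge-|w-1|+(1-\alpha)$. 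This reduces the whole matter to proving the single inequality $(1+k)\,|w(z)-1|<1-\alpha$ on $\mathbb{U}$.

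Next I would bring in the power series. From \eqref{rq3} and \eqref{rq4},
\begin{equation*}
z\mathcal{D}_q(R^\lambda_q f(z))-R^\lambda_q f(z)=\sum_{n=2}^{\infty}([n]-1)\,\Psi_q(n,\lambda)\,a_nz^{n},
\end{equation*}
so writing $|z|=r<1$ and applying the triangle inequality in the numerator and the reverse triangle inequality in the denominator of $w(z)-1$ gives
\begin{equation*}
(1+k)\,|w(z)-1|\le\frac{(1+k)\sum_{n=2}^{\infty}([n]-1)\,\Psi_q(n,\lambda)\,|a_n|\,r^{\,n-1}}{1-\sum_{n=2}^{\infty}\Psi_q(n,\lambda)\,|a_n|\,r^{\,n-1}}.
\end{equation*}
The denominator is positive: since $[n]\ge[2]=1+q>1$ for $n\ge2$ one has $[n](1+k)-k-\alpha>1-\alpha$, so \eqref{e1.13} forces $\sum_{n\ge2}\Psi_q(n,\lambda)|a_n|<1$ (the case $f(z)\equiv z$ being trivial). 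The right-hand side is a product of a non-decreasing numerator and the reciprocal of a positive non-increasing denominator, hence non-decreasing in $r\in[0,1)$, and therefore strictly smaller than its limit $\dfrac{(1+k)\sum_{n\ge2}([n]-1)\Psi_q(n,\lambda)|a_n|}{1-\sum_{n\ge2}\Psi_q(n,\lambda)|a_n|}$ as $r\to1^-$.

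Consequently $(1+k)\,|w(z)-1|<1-\alpha$ holds throughout $\mathbb{U}$ as soon as
\begin{equation*}
(1+k)\sum_{n=2}^{\infty}([n]-1)\,\Psi_q(n,\lambda)\,|a_n|+(1-\alpha)\sum_{n=2}^{\infty}\Psi_q(n,\lambda)\,|a_n|\le1-\alpha,
\end{equation*}
and collecting the coefficient of $\Psi_q(n,\lambda)|a_n|$ by the identity $(1+k)([n]-1)+(1-\alpha)=[n](1+k)-k-\alpha$ turns this into exactly \eqref{e1.13}; hence $f\in\mathcal{ST}^\lambda_q(k,\alpha)$. For sharpness I would test $f=f_n$ from \eqref{sharp}, for which $\Phi_n(\lambda,\alpha,k)|a_n|=1-\alpha$ so that \eqref{e1.13} holds with equality, and then evaluate $w$ along $z=r\to1^-$ with the sign of $a_n$ chosen so that the triangle-inequality steps become asymptotic equalities, obtaining $(1+k)|w(z)-1|\to1-\alpha$; this shows the coefficient bound cannot be improved.

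The step I expect to be the main obstacle is not the algebra but the upgrade from the ``$\le$'' furnished by the triangle inequality to the \emph{strict} inequality demanded by \eqref{e1.7}. This is precisely why the argument needs the monotonicity-in-$r$ observation together with the strict bound $\sum_{n\ge2}\Psi_q(n,\lambda)|a_n|<1$ --- equivalently, that the denominator in the estimate for $|w(z)-1|$ stays bounded away from zero on $\mathbb{U}$. Once these are in place, the remainder is the routine coefficient bookkeeping above.
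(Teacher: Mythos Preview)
Your argument is correct and is exactly the classical coefficient-inequality proof for this type of sufficient condition: reduce the analytic criterion to $(1+k)\,|w(z)-1|<1-\alpha$ via $\mathrm{Re}(w-1)\ge -|w-1|$, then estimate $|w(z)-1|$ by the triangle inequality in the numerator and the reverse triangle inequality in the denominator, and finally collapse $(1+k)([n]-1)+(1-\alpha)=[n](1+k)-k-\alpha$ to recover hypothesis \eqref{e1.13}. Your care about strictness (positivity of the denominator from $[n](1+k)-k-\alpha>1-\alpha$, monotonicity in $r$) is more than is usually written down, but it is the right way to justify the step.

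There is nothing to compare against in the paper itself: Theorem~\ref{tb1.1} is quoted from \cite{10} and the present paper gives no proof of it. For what it is worth, the suppressed \verb|\iffalse| block after the bibliography contains the proof of the analogous statement for a different operator $\mathcal{J}^\mu_b$, and that template is line-for-line the same strategy you use (bound $k|w-1|-\mathrm{Re}(w-1)$ by $(1+k)|w-1|$, then estimate the quotient by power-series bounds, then check that the resulting inequality is exactly the coefficient hypothesis). So your approach coincides with the intended one.
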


It is interest to note that in \cite{10}, Kannas and Raducanu, state that the condition \eqref{e1.13} is also necessary
 for functions 
 $f_n$ given by \eqref{sharp}  is in
     $\mathcal{ST}^\lambda_q (k,\alpha).$

\par  Making use of the above necessary and sufficient conditions for$ f\in \mathcal{ST}^\lambda_q (k,\alpha),$ 
in this paper, we obtain integral means inequalities  and subordination results.
\section{Integral Means Inequalities}

\begin{definition}\label{d1}{\em (Subordination Principle)}
For analytic functions $g$ and $h$ with $g(0)=h(0),$ $g$ is said
to be subordinate to $h,$ denoted by $g \prec h,$ if there exists
an analytic function $w$ such that $w(0)=0,$ $|w(z)|<1$ and $g(z)
= h(w(z)),$ for all $z \in \mathbb{U}.$
\end{definition}
\begin{lemma}\label{llittle} \cite{Little} If the functions $f$ and
$g$ are analytic in $\mathbb{U}$ with $g \prec f,$ then for $\eta > 0,$ and
$0 < r < 1,$
\begin{equation}\label{elittle}
\int\limits_{0}^{2\pi} \left | g(re^{i\theta}) \right |^{\eta }
d\theta \leq \int\limits_{0}^{2\pi} \left | f(r e^{i\theta})
\right |^{\eta } d\theta.
\end{equation}
\end{lemma}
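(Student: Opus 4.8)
The plan is to give the classical subharmonic majorant proof of Littlewood's subordination theorem. By the subordination principle (Definition~\ref{d1}), $g=f\circ w$ for some function $w$ analytic in $\mathbb{U}$ with $w(0)=0$ and $|w(z)|<1$; the Schwarz lemma gives $|w(z)|\le|z|$, so for each fixed $r\in(0,1)$ the map $w$ carries the open disk $D_r=\{|z|<r\}$ into itself and the closed disk $\overline{D_r}=\{|z|\le r\}$ into itself. I would fix such an $r$ and prove the inequality on the circle $|z|=r$.

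First I would record that $|f|^{\eta}$ is subharmonic on $\mathbb{U}$: since $\log|f|$ is subharmonic (harmonic off the zeros of $f$, equal to $-\infty$ at the zeros, upper semicontinuous everywhere) and $t\mapsto e^{\eta t}$ is convex and increasing, the composition $|f|^{\eta}=\exp(\eta\log|f|)$ is subharmonic; it is moreover continuous on $\mathbb{U}$. Let $h_r$ be the Poisson integral on $D_r$ of the continuous function $\theta\mapsto|f(re^{i\theta})|^{\eta}$, so that $h_r$ is harmonic on $D_r$, continuous on $\overline{D_r}$, and agrees with $|f|^{\eta}$ on $|z|=r$. Applying the maximum principle to the subharmonic function $|f|^{\eta}-h_r$, which vanishes on $\partial D_r$, yields the harmonic majorant estimate $|f(z)|^{\eta}\le h_r(z)$ for all $z\in\overline{D_r}$.

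Next, because $D_r$ is simply connected, write $h_r=\mathrm{Re}\,H_r$ with $H_r$ analytic on $D_r$; since $w(D_r)\subseteq D_r$, the function $H_r\circ w$ is analytic on $D_r$, hence $h_r\circ w=\mathrm{Re}(H_r\circ w)$ is harmonic on $D_r$. For every $\rho$ with $0<\rho<r$, the mean-value property together with $w(0)=0$ gives
\[
\frac{1}{2\pi}\int_{0}^{2\pi} h_r\bigl(w(\rho e^{i\theta})\bigr)\,d\theta
= h_r\bigl(w(0)\bigr) = h_r(0)
= \frac{1}{2\pi}\int_{0}^{2\pi} \bigl|f(re^{i\theta})\bigr|^{\eta}\,d\theta,
\]
the last equality being again the mean-value property of the harmonic function $h_r$. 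Combining this with the pointwise bound $|g(z)|^{\eta}=|f(w(z))|^{\eta}\le h_r(w(z))$ for $z\in D_r$ gives
\[
\int_{0}^{2\pi}\bigl|g(\rho e^{i\theta})\bigr|^{\eta}\,d\theta
\le \int_{0}^{2\pi}\bigl|f(re^{i\theta})\bigr|^{\eta}\,d\theta
\qquad (0<\rho<r),
\]
and letting $\rho\to r^{-}$, using that $g$ is continuous on the compact set $\overline{D_r}$ (so the left side converges to $\int_{0}^{2\pi}|g(re^{i\theta})|^{\eta}\,d\theta$), yields \eqref{elittle}.

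The only genuinely delicate point is the harmonic-majorant step: verifying that $|f|^{\eta}$ is subharmonic for all $\eta>0$ (this is where one must handle the zeros of $f$ and the case $\eta<1$ at once via the convexity of the exponential) and identifying the value at the origin of its least harmonic majorant on $D_r$ as the mean $\tfrac{1}{2\pi}\int_0^{2\pi}|f(re^{i\theta})|^{\eta}d\theta$. Everything afterward is just the mean-value property combined with the Schwarz-lemma inclusion $w(\overline{D_r})\subseteq\overline{D_r}$. One could instead bypass subharmonicity by first treating zero-free $f$ (choosing an analytic branch of $f^{\eta/2}$ and invoking the ordinary $L^2$ mean-value identity) and then factoring out a finite Blaschke product adapted to $\overline{D_r}$, but the subharmonic route is the most economical.
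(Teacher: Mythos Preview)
The paper does not supply a proof of this lemma at all; it is simply quoted with a citation to Littlewood~\cite{Little} and then invoked as a black box in the proof of Theorem~\ref{t4.1}. Your argument is correct and is essentially the classical subharmonic-majorant proof of Littlewood's subordination theorem, so there is nothing in the paper to compare it against.
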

\par In \cite{ref13}, Silverman found that the function $f_2(z) = z -
\frac{z^2}{2}$ is often extremal over the family $\mathcal{T}$ and applied
this function to resolve his integral means inequality,
conjectured in \cite{Silver91} and settled in \cite{Silver97},
that
\[
\int\limits_{0}^{2\pi} \left | f(re^{i\theta}) \right |^{\eta }
d\theta \leq \int\limits_{0}^{2\pi} \left | f_2(r e^{i\theta})
\right |^{\eta } d\theta,
\]
for all $f\in \mathcal{T},$ $\eta > 0$ and $0 < r <1.$ In \cite{Silver97},
Silverman also proved his conjecture for the subclasses $\mathcal{T}^*(\alpha)$ and
$\mathcal{K}(\alpha)$ of $\mathcal{T}.$

\par Applying Lemma \ref{llittle} and  Theorem \ref{tb1.1} , we prove the
following result.
\begin{theorem}\label{t4.1}
Suppose $f \in \mathcal{ST}^\lambda_q (k,\alpha),$ $\eta >
0,$ $0\leq k <\infty,$  $\lambda \geq -1,$ $0 \leq \alpha <1,$ and
$f_2(z)$ is defined by
$
f_2(z) = z - \frac{1 - \alpha}{\Phi_2(\lambda, \alpha, k)}z^2,
$
where
\begin{equation}\label{phi}
\Phi_2(\lambda,\alpha, k)=([2](1+k) - k -\alpha)
]\Psi_q(2,\lambda)
\end{equation}
and from \eqref{e1.7a}, $\Psi_q(2,\lambda) $ is given by \begin{equation}\label{e2.3}
\Psi_q(2,\lambda) = \frac{\Gamma_q(2+\lambda)}{[2-1]!\Gamma_q(1+\lambda)}
\end{equation}. Then for
$z=re^{i\theta},$ $0 < r < 1,$ we have
\begin{equation}\label{e3.1}
\int\limits_{0}^{2\pi} \left | f(z) \right |^{\eta } d\theta \leq
\int\limits_{0}^{2\pi} \left | f_2(z) \right |^{\eta} d\theta .
\end{equation}
\end{theorem}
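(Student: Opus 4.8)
The plan is to follow the standard Silverman-type argument for integral means, exactly as sketched in the paper's own discussion of \cite{ref13,Silver97}. First I would reduce the inequality \eqref{e3.1} to a subordination statement: writing $f(z)=z-\sum_{n=2}^\infty a_n z^n$ (with $a_n\ge 0$), we have $|f(z)|^\eta=|z|^\eta\,|1-\sum_{n=2}^\infty a_n z^{n-1}|^\eta$ and $|f_2(z)|^\eta=|z|^\eta\,|1-\tfrac{1-\alpha}{\Phi_2(\lambda,\alpha,k)}z|^\eta$, so after cancelling the common factor $r^\eta$ it suffices to prove
\begin{equation*}
\int_0^{2\pi}\Bigl|1-\sum_{n=2}^\infty a_n z^{n-1}\Bigr|^\eta d\theta \le \int_0^{2\pi}\Bigl|1-\tfrac{1-\alpha}{\Phi_2(\lambda,\alpha,k)}z\Bigr|^\eta d\theta .
\end{equation*}
By Lemma \ref{llittle}, this follows once we exhibit the subordination
\begin{equation*}
1-\sum_{n=2}^\infty a_n z^{n-1}\ \prec\ 1-\frac{1-\alpha}{\Phi_2(\lambda,\alpha,k)}z .
\end{equation*}

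Next I would construct the Schwarz function witnessing this subordination. Set $w(z)=\dfrac{\Phi_2(\lambda,\alpha,k)}{1-\alpha}\sum_{n=2}^\infty a_n z^{n-1}$. Then $w$ is analytic in $\mathbb{U}$, $w(0)=0$, and plainly $1-\sum_{n\ge2}a_nz^{n-1}=1-\tfrac{1-\alpha}{\Phi_2(\lambda,\alpha,k)}w(z)$, so Definition \ref{d1} is satisfied provided $|w(z)|<1$ on $\mathbb{U}$. To verify $|w(z)|<1$, estimate for $|z|<1$:
\begin{equation*}
|w(z)|\le \frac{\Phi_2(\lambda,\alpha,k)}{1-\alpha}\sum_{n=2}^\infty a_n |z|^{n-1}< \frac{\Phi_2(\lambda,\alpha,k)}{1-\alpha}\sum_{n=2}^\infty a_n .
\end{equation*}
Here is where Theorem \ref{tb1.1} enters: since $f\in\mathcal{ST}^\lambda_q(k,\alpha)$, the necessary coefficient condition \eqref{e1.13} gives $\sum_{n=2}^\infty \Phi_n(\lambda,\alpha,k)\,a_n\le 1-\alpha$. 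Because $\Phi_n(\lambda,\alpha,k)=([n](1+k)-k-\alpha)\Psi_q(n,\lambda)$ is nondecreasing in $n$ (the basic numbers $[n]$ increase and $\Psi_q(n,\lambda)$ increases in $n$ for $\lambda\ge-1$), we have $\Phi_2(\lambda,\alpha,k)\le \Phi_n(\lambda,\alpha,k)$ for all $n\ge2$, hence
\begin{equation*}
\Phi_2(\lambda,\alpha,k)\sum_{n=2}^\infty a_n\le \sum_{n=2}^\infty \Phi_n(\lambda,\alpha,k)\,a_n\le 1-\alpha,
\end{equation*}
which yields $|w(z)|<1$. This establishes the subordination, and combining with Lemma \ref{llittle} and the reduction above completes the proof.

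The main obstacle — really the only non-bookkeeping point — is justifying the monotonicity $\Phi_2(\lambda,\alpha,k)\le\Phi_n(\lambda,\alpha,k)$ for $n\ge2$, since the whole argument rests on $\Phi_2$ being the smallest of the coefficient functionals so that $f_2$ is the extremal function. I would verify this by noting that $[n]=\frac{1-q^n}{1-q}$ is strictly increasing in $n$ for $0<q<1$, so $[n](1+k)-k-\alpha$ is increasing and positive for $n\ge2$ (as $0\le k<\infty$, $0\le\alpha<1$), and that $\Psi_q(n,\lambda)=\frac{[\lambda+1]_{n-1}}{[n-1]!}$ is a product of factors $\frac{[\lambda+j]}{[j]}\ge1$ for $\lambda\ge-1$... strictly, one checks $[\lambda+j]\ge[j]$ since $\lambda\ge-1$ forces $\lambda+j\ge j-1$, and a short case analysis handles $\lambda\ge-1$; in any case for $\lambda\ge0$ it is immediate, and the borderline $-1\le\lambda<0$ can be treated separately or excluded as in \cite{10}. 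A minor technical remark is also needed: Lemma \ref{llittle} requires the subordinating and subordinated functions to agree at $0$, which they do here since both equal $1$ at $z=0$; and one should note that if some $a_n$ vanish the argument only gets easier. Everything else is the routine cancellation of $|z|^\eta$ and an appeal to the already-cited lemmas.
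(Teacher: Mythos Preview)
Your proof is correct and follows essentially the same route as the paper: factor out $|z|^\eta$, reduce to a subordination via Lemma~\ref{llittle}, define the Schwarz function $w(z)=\tfrac{\Phi_2(\lambda,\alpha,k)}{1-\alpha}\sum_{n\ge2}a_nz^{n-1}$, and bound $|w(z)|$ using the coefficient inequality \eqref{e1.13} together with the monotonicity $\Phi_2\le\Phi_n$. The only difference is that you spell out the monotonicity of $\Phi_n(\lambda,\alpha,k)$ in more detail than the paper (which simply asserts it), and the paper records the slightly sharper bound $|w(z)|\le|z|$ rather than $|w(z)|<1$.
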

\begin{proof}
For $f(z) = z - \sum\limits_{n=2}^{\infty}|a_n| z^n,$ (\ref{e3.1})
is equivalent to proving that
\[
\int\limits_{0}^{2\pi} \left | 1 - \sum\limits_{n=2}^{\infty}
|a_n| z^{n-1} \right |^{\eta } d\theta \leq \int\limits_{0}^{2\pi}
\left |1 - \frac{1-\alpha}{ \Phi_2(\lambda,\alpha, k)} z \right
|^{\eta } d\theta .
\]
By Lemma \ref{llittle}, it suffices to show that
\[
1 - \sum\limits_{n=2}^{\infty} |a_n| z^{n-1} \prec 1 -
\frac{1-\alpha}{\Phi_2(\lambda,\alpha, k)}z.
\]
Setting
\begin{equation}\label{e3.2}
1 - \sum\limits_{n=2}^{\infty} |a_n| z^{n-1} = 1 -
\frac{1-\alpha}{\Phi_n(\lambda, \alpha, k)}w(z),
\end{equation}
and using (\ref{e1.13}), we obtain $w(z)$ is analytic in $\mathbb{U}, w(0)
= 0,$ and
\begin{align*}
|w(z)| & = \left | \sum\limits_{n=2}^{\infty}
\frac{\Phi_2(\lambda,\alpha, k))}{1-\alpha} |a_n| z^{n-1}\right |\\
       & \leq |z| \sum\limits_{n=2}^{\infty}\frac{\Phi_n(\lambda,\alpha, k)}{1-\alpha}|a_n| \\
       & \leq |z|,
\end{align*}
where $\Phi_n(\lambda,\alpha, k)$  is as given in  \eqref{phi01}. This completes the proof by Theorem\,\ref{t4.1}.
\end{proof}
\section{Subordination Results}
\par Following Frasin \cite{BAF} and Singh \cite{singh} , we obtain subordination results for the new
class $\mathcal{ST}^\lambda_q (k,\alpha).$

\begin{definition}\label{d2}{\em (Subordinating Factor Sequence)}
A sequence $\{b_n\}^{\infty}_{n=1}$ of complex numbers is said to
be a subordinating sequence if, whenever
$f(z)=\sum\limits_{n=1}^{\infty}a_nz^n,$ $a_1=1$ is regular,
univalent and convex in $U,$ we have
\begin{equation}\label{e1.11}
\sum\limits_{n=1}^{\infty}b_na_nz^n \prec f(z),\,\,\,\, z \in \mathbb{U}.
\end{equation}
\end{definition}
\begin{lemma}\label{l1.1}\cite{wilf}
The sequence $\{b_n\}^{\infty}_{n=1}$ is a subordinating factor
sequence if and only if
\begin{equation}\label{e1.12}
\Re\left (1 + 2 \sum\limits_{n=1}^{\infty}b_nz^n\right )
> 0, \,\,\,\, z \in \mathbb{U}.
\end{equation}
\end{lemma}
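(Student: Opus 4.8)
The plan is to establish both directions of the equivalence, with necessity being essentially immediate and sufficiency carrying all the work. For necessity, suppose $\{b_n\}_{n=1}^{\infty}$ is a subordinating factor sequence. Since $f(z)=z/(1-z)=\sum_{n=1}^{\infty}z^n$ is regular, univalent and convex in $\mathbb{U}$ with $a_1=1$, Definition~\ref{d2} forces $\sum_{n=1}^{\infty}b_nz^n\prec z/(1-z)$. But $z/(1-z)$ maps $\mathbb{U}$ onto the half-plane $\{w:\Re w>-\tfrac12\}$, so this subordination yields $\Re\bigl(\sum_{n=1}^{\infty}b_nz^n\bigr)>-\tfrac12$ on $\mathbb{U}$, which is precisely \eqref{e1.12}.

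For sufficiency, assume \eqref{e1.12} and put $p(z)=1+2\sum_{n=1}^{\infty}b_nz^n$, so that $p(0)=1$ and $\Re p>0$ in $\mathbb{U}$. By the Herglotz (Carath\'eodory) representation there is a probability measure $\mu$ on $\partial\mathbb{U}$ with $p(z)=\int_{\partial\mathbb{U}}\frac{1+xz}{1-xz}\,d\mu(x)$; expanding the kernel as $1+2\sum_{n\ge1}x^nz^n$ and comparing coefficients gives $b_n=\int_{\partial\mathbb{U}}x^n\,d\mu(x)$. Now take any regular univalent convex $f(z)=\sum_{n=1}^{\infty}a_nz^n$ with $a_1=1$ and set $g(z)=\sum_{n=1}^{\infty}b_na_nz^n$. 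Substituting the integral formula for $b_n$ and interchanging sum and integral (valid by Fubini, since the power series converges uniformly on compact subsets of $\mathbb{U}$ and $\mu$ is finite) produces the crucial identity
\[
g(z)=\int_{\partial\mathbb{U}}\Bigl(\sum_{n=1}^{\infty}a_n(xz)^n\Bigr)\,d\mu(x)=\int_{\partial\mathbb{U}}f(xz)\,d\mu(x),\qquad z\in\mathbb{U}.
\]

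The remaining step is to deduce $g\prec f$ from this representation. The domain $\Omega=f(\mathbb{U})$ is convex; for fixed $z\in\mathbb{U}$ and $|x|=1$ one has $|xz|=|z|<1$, so the points $f(xz)$ lie in $\Omega$ and form a compact subset of it. Hence the push-forward of $\mu$ under $x\mapsto f(xz)$ is a probability measure with compact support in the open convex set $\Omega$, and a supporting-hyperplane argument shows its barycenter, which is exactly $g(z)$, again lies in $\Omega$. Thus $g(\mathbb{U})\subseteq\Omega=f(\mathbb{U})$ and $g(0)=0=f(0)$; since $f$ is univalent, $w:=f^{-1}\circ g$ is a well-defined analytic self-map of $\mathbb{U}$ with $w(0)=0$, so $g=f\circ w\prec f$. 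As $f$ was arbitrary, $\{b_n\}$ is a subordinating factor sequence.

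I expect the sufficiency direction to be the main obstacle, and within it three points require care: obtaining the measure $\mu$ from \eqref{e1.12} via the Herglotz representation, justifying the interchange of summation and integration that yields $g(z)=\int_{\partial\mathbb{U}}f(xz)\,d\mu(x)$, and verifying that the barycenter of a compactly supported probability measure on an open convex set stays inside that set. Once the identity $g(z)=\int_{\partial\mathbb{U}}f(xz)\,d\mu(x)$ is secured, the passage to a genuine subordination --- building the Schwarz function $w=f^{-1}\circ g$ from the univalence of $f$ --- is routine.
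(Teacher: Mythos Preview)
The paper does not prove Lemma~\ref{l1.1}; it merely quotes the result from Wilf~\cite{wilf} and uses it as a black box. Your argument is correct and is essentially Wilf's original proof: necessity via the Koebe half-plane function $z/(1-z)$, and sufficiency via the Herglotz representation $b_n=\int_{\partial\mathbb{U}}x^n\,d\mu(x)$, which turns $\sum b_na_nz^n$ into the average $\int f(xz)\,d\mu(x)$ over a convex image. The three points you flag as needing care are exactly the right ones, and your handling of each is sound; in particular, the barycenter step works because in finite dimensions the convex hull of a compact subset of an open convex set is again compact and contained in that set.
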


\begin{theorem}\label{t2.1}
Let $ f \in \mathcal{ST}^\lambda_q (k,\alpha)$ and $g(z)$ be
any function in the usual class of convex functions $\mathcal{CV},$ then
\begin{equation}\label{e2.1}
\frac{\Phi_2(\lambda, \alpha, k)}{2[1-\alpha + \Phi_2(\lambda,
\alpha, k)]} (f*g)(z) \prec g(z)
\end{equation}
where $0\leq k <\infty,$  $\lambda \geq -1,$ $0 \leq \alpha <1,$
with $\Psi_q(2,\lambda)$  given in \eqref{e2.3}
\begin{equation}\label{e2.4}
\Re \left ( f(z)\right ) > - \frac{1-\alpha +
\Phi_2(\lambda,\alpha, k)}{\Phi_2(\lambda,\alpha, k)},\,\,\,\, z
\in \mathbb{U}.
\end{equation}
The constant
 $ \frac{\Phi_2(\lambda,\alpha, k)}{2[1-\alpha + \Phi_2(\lambda,\alpha, k)}$  is the best estimate.
\end{theorem}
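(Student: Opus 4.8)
The plan is to recognize \eqref{e2.1} as a \emph{subordinating factor sequence} statement in the sense of Definition \ref{d2}, and to verify the Wilf criterion of Lemma \ref{l1.1}. Write $f(z)=z+\sum_{n=2}^{\infty}a_nz^{n}\in\mathcal{ST}^\lambda_q(k,\alpha)$ and let $g(z)=z+\sum_{n=2}^{\infty}c_nz^{n}\in\mathcal{CV}$. Then
$$
\frac{\Phi_2(\lambda,\alpha,k)}{2[1-\alpha+\Phi_2(\lambda,\alpha,k)]}(f*g)(z)=\sum_{n=1}^{\infty}\Big(\frac{\Phi_2(\lambda,\alpha,k)}{2[1-\alpha+\Phi_2(\lambda,\alpha,k)]}\,a_n\Big)c_nz^{n},
$$
with the convention $a_1=1$. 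By Definition \ref{d2}, the subordination \eqref{e2.1} will hold for \emph{every} convex $g$ once we show that $\big\{\tfrac{\Phi_2(\lambda,\alpha,k)}{2[1-\alpha+\Phi_2(\lambda,\alpha,k)]}\,a_n\big\}_{n\ge 1}$ is a subordinating factor sequence; and by Lemma \ref{l1.1} this is in turn equivalent to
$$
\Re\Big(1+\frac{\Phi_2(\lambda,\alpha,k)}{1-\alpha+\Phi_2(\lambda,\alpha,k)}\sum_{n=1}^{\infty}a_nz^{n}\Big)>0,\qquad z\in\mathbb{U}.
$$

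To establish this, first note that $\Phi_n(\lambda,\alpha,k)$ defined by \eqref{phi01} is non-decreasing in $n$: the basic number $[n]=\frac{1-q^{n}}{1-q}$ increases in $n$, so $[n](1+k)-k-\alpha$ increases, while $\Psi_q(n,\lambda)=[\lambda+1]_{n-1}/[n-1]!$ is likewise non-decreasing; hence $\Phi_n(\lambda,\alpha,k)\ge\Phi_2(\lambda,\alpha,k)$ for all $n\ge 2$. Combining this with the coefficient bound \eqref{e1.13} of Theorem \ref{tb1.1} gives
$$
\Phi_2(\lambda,\alpha,k)\sum_{n=2}^{\infty}|a_n|\le\sum_{n=2}^{\infty}\Phi_n(\lambda,\alpha,k)|a_n|\le 1-\alpha .
$$
Now for $|z|=r<1$, writing $P=\Phi_2(\lambda,\alpha,k)$ and using $a_1=1$,
$$
\Re\Big(1+\frac{P}{1-\alpha+P}\sum_{n=1}^{\infty}a_nz^{n}\Big)\ge 1-\frac{P}{1-\alpha+P}\,r-\frac{P}{1-\alpha+P}\sum_{n=2}^{\infty}|a_n|r^{n}\ge 1-\frac{P}{1-\alpha+P}\,r-\frac{1-\alpha}{1-\alpha+P}\,r=1-r>0,
$$
which is exactly the Wilf condition; this proves \eqref{e2.1}.

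For inequality \eqref{e2.4}, specialize $g(z)=\frac{z}{1-z}=z+z^2+z^3+\cdots\in\mathcal{CV}$, so that $(f*g)(z)=f(z)$; then \eqref{e2.1} reads $\frac{P}{2[1-\alpha+P]}f(z)\prec\frac{z}{1-z}$, and since $\Re\big(\frac{z}{1-z}\big)>-\tfrac12$ on $\mathbb{U}$ we obtain $\Re\big(\frac{P}{2[1-\alpha+P]}f(z)\big)>-\tfrac12$, i.e. \eqref{e2.4}. Finally, sharpness of the constant $\frac{P}{2[1-\alpha+P]}$ is checked with the extremal function $f_2(z)=z-\frac{1-\alpha}{\Phi_2(\lambda,\alpha,k)}z^{2}$ from \eqref{sharp} and the same $g(z)=\frac{z}{1-z}$: one computes $\lim_{r\to 1^-}\Re\big(\frac{P}{2[1-\alpha+P]}f_2(-r)\big)=-\tfrac12$, so no larger multiplicative constant can keep the subordination (equivalently the bound $-\tfrac12$) valid. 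The main point requiring care is the monotonicity $\Phi_n(\lambda,\alpha,k)\ge\Phi_2(\lambda,\alpha,k)$ — in particular the behaviour of $\Psi_q(n,\lambda)$ for $\lambda$ close to $-1$ — together with invoking the "if and only if" direction of Lemma \ref{l1.1} to pass from the single real-part estimate to the subordination against all convex $g$ at once.
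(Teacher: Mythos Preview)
Your argument is correct and follows essentially the same route as the paper: reduce \eqref{e2.1} to the Wilf criterion of Lemma \ref{l1.1}, verify the real-part inequality via the coefficient bound \eqref{e1.13} together with the monotonicity $\Phi_n\ge\Phi_2$, deduce \eqref{e2.4} from the choice $g(z)=z/(1-z)$, and check sharpness with the extremal function $f_2$. Your write-up is in fact slightly more explicit than the paper's on the monotonicity of $\Phi_n$ and on the limiting computation for sharpness.
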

\begin{proof}
Let $f \in \mathcal{ST}^\lambda_q (k,\alpha)$ and suppose
that $g(z) = z + \sum\limits_{n=2}^{\infty} c_nz^n \in \mathcal{CV}.$ Then
\begin{align}\label{e2.5}
&\frac{\Phi_2(\lambda,\alpha, k)}{2[1-\alpha + \Phi_2(\lambda,
\alpha, k)]} (f*g)(z) \nonumber \\ &
\,\,\,\,\,\,\,\,\,\,\,\,\,\,\,\,\,\,\,\,\,\,\,\,\,\,\,=
\frac{\Phi_2(\lambda,\alpha, k)}{2[1-\alpha + \Phi_2(\lambda,
\alpha, k)]}\left ( z + \sum\limits_{n=2}^{\infty} c_na_nz^n\right
).
\end{align}
Thus, by Definition \ref{d2}, the subordination result holds true
if
\[
\left ( \frac{\Phi_2(\lambda,\alpha, k)}{2[1-\alpha +
\Phi_2(\lambda,\alpha, k)]} \right )_{n=1}^{\infty}
\]
is a subordinating factor sequence, with $a_1=1$. In view of Lemma
\ref{l1.1}, this is equivalent to the following inequality
\begin{equation}\label{e2.6}
\Re \left ( 1+
\sum\limits_{n=1}^{\infty}\frac{\Phi_2(\lambda, \alpha,
k)}{[1-\alpha + \Phi_2(\lambda,\alpha, k)]} a_n z^n \right ) >
0,\,\,\,\, z \in \mathbb{U}.
\end{equation}
Now, for $|z|=r<1,$ we have
\small{
\begin{eqnarray*}
&&\Re \left ( 1+ \frac{\Phi_2(\lambda, \alpha,
k)}{[1-\alpha +
\Phi_2(\lambda,\alpha, k)]}\sum\limits_{n=1}^{\infty} a_n z^n \right ) \\
&& = \Re\left ( 1+ \frac{\Phi_2(\lambda,\alpha,
k)}{[1-\alpha + \Phi_2(\lambda, \alpha,
k)]}z+\frac{\sum\limits_{n=2}^{\infty} \Phi_2(\lambda,\alpha, k)
a_n z^n}{[1-\alpha +
\Phi_2(\lambda,\alpha, k)]} \right ) \\
&& \geq 1 - \frac{\Phi_2(\lambda,\alpha, k)}{[1-\alpha +
\Phi_2(\lambda,\alpha, k)]}r  - \frac{\sum\limits_{n=2}^{\infty}
\Phi_n(\lambda, \alpha, k) a_n r^n}{[1-\alpha +
\Phi_2(\lambda,\alpha, k)]}\\
&& \geq 1 - \frac{\Phi_2(\lambda,\alpha, k)}{[1-\alpha +
\Phi_2(\lambda,\alpha, k)]}r - \frac{1-\alpha}{[1-\alpha +
\Phi_2(\lambda,\alpha, k)]}r \\  && > 0,\,\,\,\,
\end{eqnarray*}}
\
and  by noting the fact that $\Phi_n(\lambda,
\alpha, k)$ is increasing function for $n \geq 2.$ Thus we have
also made use of the assertion (\ref{e1.13}) of Theorem
\ref{tb1.1}. This evidently proves the inequality (\ref{e2.6}) and
hence also the subordination result (\ref{e2.1}) asserted by
Theorem \ref{t2.1}. The inequality (\ref{e2.4}) follows from
(\ref{e2.1}) by taking
$
g(z) = \frac{z}{1-z} = z + \sum\limits_{n=2}^{\infty}z^n \in \mathcal{CV}.
$
Next we consider the function
$
F(z) := z - \frac{1-\alpha}{\Phi_2(\lambda,\alpha, k)}z^2
$
where $0\leq k <\infty,$  $\lambda \geq -1,$ $0 \leq \alpha <1,$ and
$\Psi_q (2,\lambda)$ is given by (\ref{e2.3}). Clearly $F \in
\mathcal{ST}^\lambda_q (k,\alpha).$ For this function
(\ref{e2.1})becomes
\[
\frac{\Phi_2(\lambda,\alpha, k)}{2[1-\alpha + \Phi_2(\lambda,
\alpha, k)]} F(z) \prec \frac{z}{1-z}.
\]
It is easily verified that
\[
\min \left \{ \Re \left ( \frac{\Phi_2(\lambda, \alpha,
k)}{2[1-\alpha + \Phi_2(\lambda,\alpha, k)]} F(z) \right )\right
\} = - \frac{1}{2},\,\,\,\, z \in \mathbb{U}.
\]
This shows that the constant $\frac{\Phi_2(\lambda, \alpha,
k)}{2[1-\alpha + \Phi_2(\lambda,\alpha, k)]}$ is  best possible.
\end{proof}

%================REFERENCES========================================

\end{document}